\documentclass{amsart}

\usepackage{amsmath,amssymb,latexsym,amsthm,newlfont,enumerate}

\usepackage{graphicx}
\usepackage[all]{xy}



\theoremstyle{plain}

\newtheorem{thm}{Theorem}[section]

\newtheorem{pro}[thm]{Proposition}

\newtheorem{lem}[thm]{Lemma}

\newtheorem{cor}[thm]{Corollary}

\newtheorem{assumption}[thm]{Assumption} 

\theoremstyle{definition}

\theoremstyle{remark}


\newcommand{\Z}{\mathbb{Z}}

\newcommand{\C}{\mathbb{C}}
\newcommand{\R}{\mathbb{R}}
\newcommand{\Q}{\mathbb{Q}}

\newcommand{\OO}{\mathcal{O}}

\newcommand{\mcal}{\mathcal}
\newcommand{\id}{\mathrm{id}}


\DeclareMathOperator{\Sing}{Sing}

\DeclareMathOperator{\GL}{GL}
\DeclareMathOperator{\SL}{SL}



\DeclareMathOperator{\Aut}{Aut}
\DeclareMathOperator{\Bir}{Bir}


\DeclareMathOperator{\Nef}{Nef}



\begin{document}
\title{Automorphisms of Calabi-Yau threefolds with Picard number three}

\author{Vladimir Lazi\'c}
\address{Mathematisches Institut, Universit\"at Bonn, Endenicher Allee 60, 53115 Bonn, Germany}
\email{lazic@math.uni-bonn.de}

\author{Keiji Oguiso}
\address{Department of Mathematics, Osaka University, Toyonaka 560-0043 Osaka,
Japan and Korea Institute for Advanced Study, Hoegiro 87, Seoul, 130-722, Korea}
\email{oguiso@math.sci.osaka-u.ac.jp}

\author{Thomas Peternell}
\address{Mathematisches Institut, Universit\"at Bayreuth, 95440 Bayreuth, Germany}
\email{thomas.peternell@uni-bayreuth.de}

\dedicatory{Dedicated to  Professor Yujiro Kawamata on the occasion of his 60th birthday}

\thanks{All authors were partially supported by the DFG-Forschergruppe 790 ``Classification of Algebraic Surfaces and Compact Complex Manifolds". The first author was partially supported by the DFG-Emmy-Noether-Nachwuchsgruppe ``Gute Strukturen in der h\"oherdimensionalen birationalen Geometrie". The second author is supported by JSPS Grant-in-Aid (S) No 25220701, JSPS Grant-in-Aid (S) No 22224001, JSPS Grant-in-Aid (B) No 22340009, and by KIAS Scholar Program.}

\begin{abstract}
We prove that the automorphism group of a Calabi-Yau threefold with Picard number three is either finite, or isomorphic to the infinite cyclic group up to finite kernel and cokernel.
\end{abstract}

\maketitle

\section{Introduction}

In this paper we are interested in the automorphism group of a Calabi-Yau threefold with small Picard number. Here, a Calabi-Yau threefold is a smooth complex projective threefold $X$ with trivial canonical bundle $K_X$ such that $h^1(X,\OO_X)=0$. 

It is a classical fact that the group of birational automorphisms $\Bir(X)$ and the automorphism group $\Aut(X)$ are finite groups and coincide when $X$ is a Calabi-Yau threefold with $\rho(X)=1$. It is, however, unknown which finite groups really occur as automorphism groups, even for smooth quintic threefolds. When $\rho(X)=2$, the automorphism group is also finite by \cite[Theorem 1.2]{Og12} (see also \cite{LP12}), while there is an example of a Calabi-Yau threefold with $\rho(X)=2$ and with infinite $\Bir(X)$ \cite[Proposition 1.4]{Og12}.

In contrast, Borcea \cite{Bor91b} gave an example of a Calabi-Yau threefold with $\rho(X)=4$ having infinite automorphism group, and the same phenomenon is expected for any Picard number $\rho(X)\geq4$; for examples with large Picard numbers, see \cite{GM93,OT13}. 

Thusfar, the case of Picard number $3$ remained unexplored. Perhaps surprisingly, we show that the automorphism groups of such threefolds are relatively small: 

\begin{thm} \label{thm:main}
Let $X$ be a Calabi-Yau threefold with $\rho(X) = 3$. 

Then the automorphism group $\Aut(X)$ is either finite, or it is an almost abelian group of rank $1$, i.e.\ it is isomorphic to $\Z$ up to finite kernel and cokernel.
\end{thm}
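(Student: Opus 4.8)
The plan is to study the natural action of $\Aut(X)$ on the Néron--Severi space $\sNR$, which is $3$-dimensional, and to exploit the fact that this action preserves the cubic form $c(D)=D^3$ coming from intersection theory, as well as the ample cone $\Amp(X)$ and the movable cone $\Mov(X)$. The image $\Gamma$ of $\Aut(X)$ in $\GL(\sN)$ has finite kernel (by the usual argument: an automorphism acting trivially on cohomology has finite order, since it acts on a polarization and one controls the fixed locus), so it suffices to show that $\Gamma$ is finite or almost infinite cyclic. Since $\Gamma$ preserves the integral lattice $\sN\cong\Z^3$ and the cubic form $c$, it is contained in the integral automorphism group of $(\sN,c)$. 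The first main step is therefore to understand the possible infinite subgroups of this orthogonal-type group in rank $3$: I expect a case division according to the geometry of the real hypersurface $\{c=0\}$ and, more importantly, according to the structure of the nef and movable cones.

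The second step is to bring in the Morrison--Kawamata cone conjecture circle of ideas, which in this setting is partly known: one knows (Kawamata, Oguiso--Peternell) that for a Calabi--Yau threefold the movable cone admits a rational polyhedral fundamental domain for the action of $\Bir(X)$, and the nef cone one for $\Aut(X)$, \emph{provided} one has finiteness statements which in $\rho=3$ one can try to establish directly. The key dichotomy is: either the nef cone $\Amp(X)$ is rational polyhedral, in which case $\Aut(X)$ permutes finitely many extremal rays and up to finite index fixes all of them, forcing $\Gamma$ (hence $\Aut(X)$) to be finite; or $\Amp(X)$ is not rational polyhedral, in which case its boundary must interact with the cubic cone $\{c=0\}$. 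In the latter case the boundary $\partial\Amp(X)$ contains an irrational part lying on $\{c^{}=0\}$, and one analyzes how $\Gamma$ can act on such a cone inside $\R^3$.

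The third step is the rank bound. An infinite group of integral matrices preserving a cubic form and an open convex cone in $\R^3$ cannot contain a free group of rank $\geq 2$ or a rank-$2$ abelian subgroup acting with enough eigenvalue growth, essentially because the "hyperbolic" directions are too few: preserving a $2$-dimensional face structure or a conic section in the $2$-sphere leaves room only for a virtually cyclic group. Concretely, I would pass to the action on the boundary of the cone, realize it as a group acting on a $1$-dimensional object (a circle or an interval, after projectivizing), and use that a discrete subgroup of $\GL(\sN)$ acting properly on such a low-dimensional boundary with a cone-preserving constraint is elementary, hence almost abelian of rank at most $1$. Combined with the finite-kernel statement, this yields that $\Aut(X)$ is finite or almost infinite cyclic.

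The main obstacle, I expect, is precisely the borderline case where $\Amp(X)$ is neither rational polyhedral nor "round": ruling out a rank-$2$ almost abelian $\Aut(X)$ requires showing that two commuting automorphisms of infinite order cannot coexist, which amounts to controlling the possible pairs of invariant nef classes on $\{c=0\}$ and showing the associated contractions or fiber space structures (via the base-point free theorem applied to nef classes on the boundary) are incompatible — one would likely split further into subcases according to whether these boundary nef classes are "effective of numerical dimension $1$, $2$, or $3$" and rule out the rank-$2$ possibilities using the structure of Calabi--Yau fiber spaces over curves and surfaces in dimension three, together with the non-existence of the relevant torsion-free abelian actions on the resulting low-dimensional bases.
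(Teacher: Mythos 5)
Your overall setup (pass to the image of $\Aut(X)$ in $\GL(N^1(X))$, use finiteness of the kernel, and exploit invariance of the intersection cubic and of the nef cone) agrees with the paper, but the two steps that carry the actual content are not proofs, and the heuristic behind them is not correct as stated. First, the Morrison--Kawamata cone conjecture statements you invoke are not available: Kawamata's theorem concerns Calabi--Yau \emph{fiber spaces} over a positive-dimensional base, and the cone conjecture for Calabi--Yau threefolds themselves is open; your hedge ``provided one has finiteness statements which in $\rho=3$ one can try to establish directly'' is exactly the missing content. Moreover, even granting the cone conjecture, it cannot by itself yield the theorem: Calabi--Yau threefolds with $\rho\geq 4$ have infinite (and in general not virtually cyclic) automorphism groups, so the conclusion must come from rank-$3$-specific algebra, not from fundamental-domain statements. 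Second, your ``rank bound'' step asserts that an infinite integral group preserving a cubic and an open convex cone in $\R^3$ is elementary because the projectivized boundary is one-dimensional; this is false as a general principle --- lattices in $O(1,2)$ preserve the round cone in $\R^3$ and contain free groups of rank $2$ (this is precisely the behavior one sees for K3 surfaces of Picard number $3$), so ``the hyperbolic directions are too few'' is not an argument. Something must rule out such non-elementary actions, and nothing in your sketch does.

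The ingredient you never use, and which is the engine of the paper's proof, is the second Chern class. If $c_2(X)=0$ in $H^4(X,\R)$, then $X$ is of Type A and $\Aut(X)$ is finite by Oguiso--Sakurai, so one may assume $c_2(X)\neq 0$; then every automorphism preserves the linear form $L=c_2(X)^\perp$, and the dual action on $H^4$ forces the eigenvalue $1$. Combining this with Birkhoff--Perron--Frobenius on the nef cone, the paper shows that for an infinite-order element the eigenvalues are $1/\alpha,\,\alpha,\,1$, and --- crucially --- that the intersection cubic splits as $C=QL$ or $C=L_1L_2L$ with $L$ a factor (using the Hodge index theorem and Lefschetz to get the needed nonvanishing/vanishing of triple products). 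It is this reducibility of $C$, with its singular locus and tangency data being $\Aut$-invariant, that pins every element of the group down: in the $\alpha>1$ case one gets an injection into a group of $\GL(2,\Z)$ preserving two lines (hence virtually cyclic by an elementary discreteness argument), and in the unipotent case $\alpha=1$ one gets explicit upper-triangular matrices and an injective homomorphism to $\Z$. Your closing paragraph correctly identifies that ruling out rank-$2$ behavior is the crux, but the fiber-space/base-point-free route you propose there is speculative and unnecessary; without the $c_2$-invariant hyperplane and the splitting of the cubic, the case analysis you envision has no mechanism to exclude non-abelian or rank-$2$ subgroups.
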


We investigate automorphisms $g$ of infinite order and distinguish the cases when $g$ has an eigenvalue different than $1$, and when $g$ only has eigenvalue $1$.  Theorem \ref{thm:main}  then follows from Corollary \ref{cor:1} and Proposition \ref{pro:cubic2} below. 

\vskip 2mm
At the moment, we do not have an example where $\Aut(X)$ is an infinite group. Existence of such an example would show that $3$ is the smallest possible Picard number of a Calabi-Yau threefold with infinite automorphism group. However, finiteness of the automorphism group is known when the fundamental group of $X$ is infinite: when $X$ is a Calabi-Yau threefold of Type A, i.e.\ $X$ is an \'etale quotient of a torus, then $\Aut(X)$ is finite by \cite[Theorem (0.1)(IV)]{OS01}.  The case when $X$ is of Type K, i.e.\ $X$ is an \'etale quotient of a product of an elliptic curve and a K3 surface, of Picard number $\rho(X) \leq 3$, is studied in the forthcoming work \cite{HK13}. 

\vskip 2mm
It is our honour to dedicate this paper to Professor Yujiro Kawamata on the occasion of his sixtieth birthday. This article and our previous papers \cite{Og12,LP12} are inspired by his beautiful paper \cite{Kaw97}.
\section{Preliminaries}

We first fix some notation. Let $X$ be a Calabi-Yau threefold with Picard number $\rho(X) = 3$. The automorphism group of $X$ is denoted by ${\Aut}(X)$ and $N^1(X)$ is the N\'eron-Severi group of $X$ generated by the numerical classes of line bundles on $X$. Note that $N^1(X)$ is a free $\mathbb Z$-module of rank $3$. There is a natural homomorphism
$$ r\colon {\Aut}(X) \to {\GL}(N^1(X)),$$ 
and we set $\mathcal A(X) = r({\Aut}(X))$. Note that the kernel of $r$ is finite \cite[Proposition 2.4]{Og12}, hence ${\Aut}(X)$ is finite if and only if $\mathcal A(X)$ is finite. We furthermore let $N^1(X)_{\R} := N^1(X) \otimes \mathbb R$
be the vector space generated by $N^1(X)$. 

\begin{pro}\label{pro:plane}
Let $\ell_1$ and $\ell_2$ be two distinct lines in $\R^2$ through the origin, and let $G$ be a subgroup of $\GL(2,\Z)$ which acts on $\ell_1\cup\ell_2$. 

If $G$ is infinite, then it is an almost abelian group of rank $1$, i.e. $G$ contains an abelian subgroup of finite index. 
\end{pro}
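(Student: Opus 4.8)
The plan is to reduce to a subgroup on which the eigenvalue data becomes transparent, and then to combine a small Galois-theoretic rigidity argument with Dirichlet's unit theorem. Since $G$ preserves the set $\ell_1\cup\ell_2$, it permutes the two lines, so there is a subgroup $G_0\le G$ of index at most $2$ that maps each of $\ell_1$ and $\ell_2$ to itself. As the conclusion ``contains a finite-index subgroup isomorphic to $\Z$'' passes freely between a group and its finite-index subgroups, and $G_0$ is again infinite, it suffices to treat $G_0$. Choosing nonzero $v_i\in\ell_i$, linear independence of $v_1,v_2$ (here we use $\ell_1\neq\ell_2$) shows that every element of $G_0$ is diagonal in the basis $(v_1,v_2)$; in particular $G_0$ is abelian, which already gives the qualitative part. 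It remains to show that the infinite abelian group $G_0$ is virtually cyclic.

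Next I would pass once more to the index-at-most-$2$ subgroup $G_1=G_0\cap\SL(2,\Z)$ and consider the homomorphism $\phi\colon G_1\to\R^*$ sending $g$ to its eigenvalue $\lambda(g)$ along $\ell_1$. It is injective, since a matrix of $\SL(2,\Z)$ diagonal in the basis $(v_1,v_2)$ with eigenvalue $1$ on $\ell_1$ is the identity; thus $G_1\cong H:=\phi(G_1)\le\R^*$. The determinant-one condition gives $g=\operatorname{diag}\bigl(\lambda(g),\lambda(g)^{-1}\bigr)$, so integrality of the entries of $g$ yields the key constraint $\lambda+\lambda^{-1}\in\Z$ for every $\lambda\in H$. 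If $H$ were finite then $G$ would be finite, so $H$ is infinite; since the torsion subgroup of $\R^*$ is $\{\pm1\}$, $H$ contains an element $\lambda_0$ of infinite order, which after inverting we may assume satisfies $|\lambda_0|>1$. Writing $\lambda_0+\lambda_0^{-1}=n_0\in\Z$, the element $\lambda_0$ is a root of $t^2-n_0t+1$, hence an algebraic unit, and it is irrational because $|\lambda_0|>1$; so $K:=\Q(\lambda_0)$ is a real quadratic field and $\lambda_0\in\mathcal{O}_K^*$. Likewise every $\lambda\in H\setminus\{\pm1\}$ is a unit generating a real quadratic field.

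The crux is to show $H\subseteq\mathcal{O}_K^*$. Suppose $\mu\in H\setminus\{\pm1\}$ with $\Q(\mu)\neq K$. Then $L:=\Q(\lambda_0,\mu)$ is biquadratic of degree $4$, and the automorphism $\sigma\in\operatorname{Gal}(L/\Q)$ fixing $\Q(\mu)$ pointwise sends $\lambda_0\mapsto\lambda_0^{-1}$ while fixing $\mu$. Since $\lambda_0\mu\in H$, its trace $\lambda_0\mu+\lambda_0^{-1}\mu^{-1}$ lies in $\Z$, hence is fixed by $\sigma$; but $\sigma$ sends it to $\lambda_0^{-1}\mu+\lambda_0\mu^{-1}$, and equating the two forces $(\lambda_0-\lambda_0^{-1})(\mu-\mu^{-1})=0$, contradicting $\lambda_0\neq\lambda_0^{-1}$ and $\mu\neq\mu^{-1}$. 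Therefore $H\subseteq K$, and being units, $H\subseteq\mathcal{O}_K^*$. By Dirichlet's unit theorem $\mathcal{O}_K^*\cong\{\pm1\}\times\Z$ has rank $1$, so the infinite subgroup $H\cong G_1$ contains an infinite cyclic subgroup of finite index; chasing back through $G_1\le G_0\le G$ produces the desired finite-index copy of $\Z$ in $G$. The one genuinely delicate point is the biquadratic computation, which is exactly where the hypothesis $G\le\GL(2,\Z)$---rather than $\GL(2,\R)$---is used essentially; everything else is routine bookkeeping with finite-index subgroups, simultaneous diagonalization, and Dirichlet's theorem.
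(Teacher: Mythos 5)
Your argument is correct, and after the common opening moves (pass to the finite-index subgroup preserving each line with determinant one, diagonalize in a basis $(v_1,v_2)$, and embed that subgroup into $\R^*$ via the eigenvalue along $\ell_1$) it finishes along a genuinely different route from the paper. The paper proves the eigenvalue group is \emph{discrete}: if eigenvalues $\alpha_{g_i}\to 1$, then $g_i\to\id$ in $\GL(2,\R)$, and integrality of the action on $\Z^2$ forces $g_i=\id$ for $i\gg0$; a nontrivial discrete subgroup of the positive reals is infinite cyclic, which gives the conclusion. You instead exploit the constraint $\lambda+\lambda^{-1}\in\Z$: every eigenvalue $\neq\pm1$ is a real quadratic unit, your biquadratic Galois computation (applying $\sigma$ to the rational integer $\lambda_0\mu+\lambda_0^{-1}\mu^{-1}$ and extracting $(\lambda_0-\lambda_0^{-1})(\mu-\mu^{-1})=0$) forces all eigenvalues into a single real quadratic field $K$, and Dirichlet's unit theorem bounds the rank of $\mathcal{O}_K^*$ by one. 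Both proofs use the hypothesis $G\le\GL(2,\Z)$ essentially, but in different guises: the paper through the lattice $\Z^2$ and a limiting argument, you through integral traces and unit groups. The paper's ending is more elementary (no algebraic number theory beyond the fact that a discrete subgroup of $(\R_{>0},\cdot)$ is cyclic), while yours gives sharper structural information, namely that the eigenvalue group sits inside $\mathcal{O}_K^*\cong\{\pm1\}\times\Z$ for one real quadratic field $K$. A minor point in your favour: your injectivity of $\phi$ at the $\SL$ level is cleaner than the paper's assertion that $g\mapsto\alpha_g$ is an isomorphism, which implicitly uses that the second eigenvalue of an element fixing $x_1$ must equal $+1$.
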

\begin{proof}
The proof follows from that of \cite[Theorem 3.9]{LP12}, and we recall the argument for the convenience of the reader. Fix nonzero points $x_1\in\ell_1$ and $x_2\in\ell_2$. Then for any $g\in G$ there exist a permutation $(i_1,i_2)$ of the set $\{1,2\}$ and real numbers $\alpha_1$ and $\alpha_2$ such that $gx_1=\alpha_1 x_{i_1}$ and $gx_2=\alpha_2 x_{i_2}$. It follows that there are positive numbers $\beta_1$ and $\beta_2$ such that $g^4x_i=\beta_i x_i$. Hence, taking a quotient of $G$ by a finite group, we may assume that $G$ acts on $\R_+x_1$ and $\R_+x_2$.

For every $g\in G$, let $\alpha_g$ be the positive number such that $gx_1=\alpha_g x_1$, and set $\mcal S=\{\alpha_g\mid g\in G\}$. Then $\mcal S$ is a multiplicative subgroup of $\R^*$ and the map 
$$ G \to \mcal S, \quad g \mapsto \alpha_g$$
is an isomorphism of groups. It  therefore suffices to show that $\mcal S$ is an infinite cyclic group. By \cite[21.1]{Fo81}, it is enough to prove that $\mathcal S$ is discrete. Otherwise, we can pick a sequence $(g_i)$ in $G$ such that $(\alpha_{g_i})$ converges to $1$. Fix two linearly independent points $h_1,h_2\in\Z^2$. Then $g_ih_1\to h_1$ and $g_ih_2\to h_2$ when $i\to\infty$. Since $g_ih_1,g_ih_2\in\Z^2$, this implies that $g_ih_1=h_1$ and $g_ih_2=h_2$ for $i\gg0$, and hence $g_i=\id$ for $i\gg0$. 
\end{proof}

In order to prove our main result, Theorem \ref{thm:main}, we first show that the cubic form on our Calabi-Yau threefold $X$ always splits in a special way, and this almost immediately has strong consequences on the structure of the automorphism group.

In this paper, when $L$ is a linear, quadratic or cubic form on $N^1(X)_\R$, we do not distinguish between $L$ and the corresponding locus $(L=0)\subseteq N^1(X)_\R$.

We start with the following lemma.

\begin{lem} \label{inf}
Let $X$ be a Calabi-Yau threefold with Picard number $3$. Assume that $\Aut(X)$ is infinite. 

Then there exists $g\in\mathcal A(X)$ with $\det g=1$ such that $\langle g\rangle\simeq\Z$.
\end{lem}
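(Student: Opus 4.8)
The plan is to exploit the natural lattice structure on $N^1(X)$ together with the fact that automorphisms preserve the cup-product cubic form $F(D) = D^3$ and the intersection form with $c_2(X)$. First I would recall that, since $\Aut(X)$ is infinite and the kernel of $r$ is finite \cite[Proposition 2.4]{Og12}, the image $\mathcal A(X) \subseteq \GL(N^1(X))$ is an infinite subgroup of $\GL(3,\Z)$. Every element of $\mathcal A(X)$ preserves the integral cubic form $F$, the quadratic form $D \mapsto D \cdot c_2(X)$, and the ample cone; in particular $\mathcal A(X)$ consists of matrices of determinant $\pm 1$ whose entries and characteristic polynomials are bounded in a way compatible with these invariant forms.

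Next I would pass to a finite-index subgroup to kill the determinant: the subgroup $\mathcal A(X)^+ = \{g \in \mathcal A(X) \mid \det g = 1\}$ has index at most $2$ in $\mathcal A(X)$, hence is still infinite. The real work is to produce inside $\mathcal A(X)^+$ an element of infinite order. Here the key point is that a finitely generated infinite subgroup $\Gamma$ of $\GL(n,\Z)$ that contains no element of infinite order would be a torsion group; by Burnside--Schur (a finitely generated torsion subgroup of $\GL(n,\C)$ is finite), such a $\Gamma$ would be finite, a contradiction. Since $\mathcal A(X)$ is residually finite and we only need \emph{some} element of infinite order, it suffices to observe that $\mathcal A(X)$ itself, being infinite, cannot be torsion, so it contains an element $h$ of infinite order; replacing $h$ by $h^2$ if necessary we may assume $\det h = 1$, and then $g := h^2 \in \mathcal A(X)^+$ with $\langle g \rangle \simeq \Z$.

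The main obstacle is the Burnside--Schur input and making sure it applies: one must know that $\mathcal A(X)$ is finitely generated (which follows since $\Aut(X)$ is finitely generated — e.g. because it is a discrete group acting on the finite-dimensional space $N^1(X)_\R$ preserving a rational polyhedral-type structure, or more directly because $\mathcal A(X)$ sits inside $\GL(3,\Z)$ and is the image of the finitely generated group $\Aut(X)$; in fact any subgroup of $\GL(3,\Z)$ is finitely generated since $\GL(3,\Z)$ is). Granting finite generation, an infinite finitely generated linear group over $\Z$ must contain an element of infinite order, which is exactly what we want. One then only needs to check that squaring lands in $\det = 1$, which is immediate, and that an infinite-order element stays infinite-order after squaring, which is clear. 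This completes the construction of $g$.
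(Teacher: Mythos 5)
Your reduction to determinant $1$ (squaring an infinite-order element) is fine, and your overall strategy---show that an infinite subgroup of $\GL(3,\Z)$ cannot be a torsion group---is the same as the paper's. The gap is in how you justify that key step. The Burnside--Schur theorem you invoke applies to \emph{finitely generated} torsion linear groups, and neither of your justifications of finite generation works: it is false that every subgroup of a finitely generated group is finitely generated (for instance $\SL(2,\Z)\subset\GL(3,\Z)$ contains a free group of rank $2$, whose commutator subgroup is free of infinite rank), and finite generation of $\Aut(X)$, or of $\mathcal A(X)$, for a Calabi--Yau threefold is not a known general fact one may simply assert---it is essentially what the Morrison--Kawamata cone conjecture would give here, and is far stronger than the lemma being proved. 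So as written the argument does not close.

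The step can be repaired without any finite generation hypothesis, and this is exactly what the paper does using $\rho(X)=3$: if every element of $\mathcal A(X)\cap\SL(N^1(X))$ had finite order, then each such $h$ has characteristic polynomial of degree $3$ over $\Z$, so its eigenvalues are roots of unity $\xi$ with $\varphi(\mathrm{ord}\,\xi)\le 3$, which forces $\mathrm{ord}(h)\le 6$; a linear torsion group of \emph{bounded exponent} is finite by Burnside's theorem (no finite generation required), contradicting the infiniteness of $\mathcal A(X)$. An alternative patch is Minkowski's lemma: the kernel of the reduction map $\GL(3,\Z)\to\GL(3,\Z/3\Z)$ is torsion-free, so any torsion subgroup of $\GL(3,\Z)$ embeds in the finite group $\GL(3,\Z/3\Z)$. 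With either of these in place of the Burnside--Schur appeal, your outline becomes a complete proof.
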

\begin{proof}
By  possibly replacing $\mcal A(X)$ by  the subgroup $\mcal A(X) \cap SL(N^1(X))$ of index at most $2$, we may assume that all elements of $\mcal A(X)$ have determinant $1$. Assume that all elements of $\mcal A(X)$ have finite order, and fix an element 
$$h\in\mcal A(X)\subseteq\GL(N^1(X))$$ 
of order $n_h$. Since $\rho(X)=3$, the characteristic polynomial $\Phi_h(t)\in\Z[t]$ of $h$ is of degree $3$. If $\xi$ is an eigenvalue of $h$, then $\xi^{n_h}=1$, and hence $\varphi(n_h)\leq3$, where $\varphi$ is Euler's function. An easy calculation shows that then $n_h\leq6$, and therefore $\mcal A(X)$ is a finite group by Burnside's theorem, a contradiction. 
\end{proof}

If $c_2(X) = 0$ in $H^4(X,\mathbb R)$, then ${\Aut}(X)$ is finite by \cite[Theorem (0.1)(IV)]{OS01}. Combining this with Lemma \ref{inf}, we may assume the following:

\begin{assumption} \label{assumption}
Let $X$ be a Calabi-Yau threefold with Picard number $3$. We assume that $c_2(X) \ne 0$ and that $\Aut(X)$ is infinite, and we fix an element $g \in \mathcal A(X)$ of infinite order as given in Lemma \ref{inf}. We denote by $C$ the cubic form on $N^1(X)_\R$ given by the intersection product.
\end{assumption} 

\begin{pro}\label{pro:uvw}
Let $h \in \mcal A(X)$.
\begin{enumerate}
\item[(i)] If $h$ is of infinite order, then there exist a real number $\alpha \geq 1$ and (when $\alpha=1$ not necessarily distinct) nonzero elements $u,v,w\in N^1(X)_{\R}$ such that $w$ is integral, $v$ is nef, and 
$$hu=\frac1\alpha u,\quad hv=\alpha v,\quad hw=w.$$ 
Moreover, if $\alpha =1$, then $\alpha$ is the unique eigenvalue of (the complexified) $h$.
\item[(ii)] If $h \neq \id$ has finite order, then (the complexified) $h$ has eigenvalues $1$, $\lambda$, $\bar\lambda$, where $\lambda\in\{\pm i, \pm (\frac12 \pm i\frac{\sqrt 3}2)\}$. 
\end{enumerate}
\end{pro}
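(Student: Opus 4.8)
The plan is to play three $h$-invariant structures off against the integrality of $h$: the cubic form $C$, the nef cone $\Nef(X)$, and the linear form defined by $c_2(X)$. Throughout I may assume $\det h=1$, as after the reduction in the proof of Lemma \ref{inf}.

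\emph{Step 1: the eigenvalue $1$ and the shape of the characteristic polynomial.} Any automorphism $\sigma$ with $r(\sigma)=h$ fixes $c_2(X)$, so the linear form $\ell(D)=c_2(X)\cdot D$ on $N^1(X)_\R$ satisfies $\ell\circ h=\ell$; it is nonzero because $c_2(X)\neq 0$ by Assumption \ref{assumption} and the cup-product pairing $H^4(X,\R)\times H^2(X,\R)\to\R$ is perfect. Hence $1$ is an eigenvalue of $h$, and since $h-\id$ is an integral matrix with nontrivial kernel I may fix a nonzero integral $w$ with $hw=w$. As $\det h=1$, the characteristic polynomial is $(t-1)(t^2+at+1)$ with $a\in\Z$. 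If the two other eigenvalues lie on the unit circle they form a conjugate pair with $a=-2\cos\theta\in\{-2,-1,0,1,2\}$; here $a\in\{-1,0,1\}$ produces exactly $\lambda\in\{\pm i,\pm(\tfrac12\pm i\tfrac{\sqrt3}2)\}$, $a=-2$ gives eigenvalues $1,1,1$, and $a=2$ gives eigenvalues $1,-1,-1$. Otherwise the two other eigenvalues are real of the form $\alpha,1/\alpha$ with $|\alpha|>1$.

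\emph{Step 2: Perron--Frobenius, and conclusion of (i) and (ii).} The cone $\Nef(X)$ is closed, full-dimensional, pointed, and preserved by $h$; by a Perron--Frobenius (Krein--Rutman) argument, $\rho(h)$ is an eigenvalue of $h$ with an eigenvector in $\Nef(X)$. Suppose $h$ has infinite order. If some eigenvalue has modulus $>1$, then the non-unit eigenvalues are real $\alpha,1/\alpha$; the possibility $\alpha<-1$ is excluded because then $\rho(h)=|\alpha|$ would not be an eigenvalue of $h$, so after renaming $\alpha>1$ we get, from Perron--Frobenius applied to $h$ and to $h^{-1}$, nef classes $v,u$ with $hv=\alpha v$ and $hu=\tfrac1\alpha u$; together with the integral $w$ of Step 1 this proves (i), the three eigenvalues being distinct. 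If all eigenvalues have modulus $1$, then $h$ is not diagonalizable (a diagonalizable matrix with roots-of-unity eigenvalues has finite order), so it has a repeated real eigenvalue, hence—using $\det h=1$ and the presence of $1$—eigenvalues $1,1,1$ or $1,-1,-1$; granting Step 3 we discard the latter, so $h$ is unipotent with unique eigenvalue $1$, a nef Perron--Frobenius vector $v$ has $hv=v$, and (i) holds with $\alpha=1$, $u=v$. For (ii), when $h\neq\id$ has finite order, Step 1 leaves only $a\in\{-1,0,1\}$ once eigenvalues $1,-1,-1$ are excluded, which is the asserted list.

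\emph{Step 3: the obstacle---no element of $\mathcal A(X)$ has $-1$ as an eigenvalue.} By Step 1 and $\det h=1$, having $-1$ as an eigenvalue is the same as having eigenvalues $1,-1,-1$, and this is the one case still to be ruled out in both (i) and (ii); I expect it to be the technical heart of the proof. The plan: the $h$-fixed subspace is a line, so an $h$-invariant ample class (which exists) is a positive multiple of $w$, whence $w$ is ample and $w^3>0$. Writing $N^1(X)_\R=\R w\oplus V$ with $V=\ker(h+\id)$ and using coordinates $x_1$ along $w$ and $x_2,x_3$ along $V$, invariance of $C$ under $h$ kills every monomial containing $x_1^2$ and every monomial purely in $x_2,x_3$, so $C(x)=w^3\,x_1^3+x_1\,Q(x_2,x_3)$; the Hodge index theorem for the ample class $w$ shows that $D\mapsto w\cdot D^2$ has signature $(1,2)$, forcing $Q$ to be negative definite. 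One then contradicts the geometry of $X$: this makes $\Nef(X)$ a round cone of Lorentzian type on which $h$ acts as $\mathrm{diag}(1,-1,-1)$, which is very restrictive in view of Wilson's analysis of the K\"ahler cone of a Calabi--Yau threefold (every boundary divisor $D$ would satisfy $D^3=0$); alternatively one studies the fixed locus of $\sigma$ on a smooth invariant surface in $|mw|$, or invokes the structure results of \cite{Og12,LP12}. The non-diagonalizable infinite-order version of this case requires the same analysis on the plane $\ker(h^2-\id)$, on which $h$ acts as $\mathrm{diag}(1,-1)$. Making this last contradiction precise and clean is, in my view, the main difficulty.
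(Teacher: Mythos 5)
Your Steps 1 and 2 are correct and are essentially the paper's own argument: invariance of $c_2(X)$ produces the eigenvalue $1$ together with an integral eigenvector $w$, the Birkhoff--Perron--Frobenius theorem applied to $\Nef(X)$ produces $\alpha=\rho(h)\geq 1$ with a nef eigenvector $v$, $\det h=1$ forces the third eigenvalue $1/\alpha$ when $\alpha>1$, and integrality of the characteristic polynomial handles the modulus-one case. (Applying Perron--Frobenius to $h^{-1}$ to make $u$ nef is fine and even slightly more than the paper claims at this stage; nefness of $u$ is only established later, in Proposition \ref{pro:uvwBigger}.) The genuine gap is your Step 3: the exclusion of the eigenvalue configuration $1,-1,-1$ is never proved. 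The routes you sketch (negative definite $Q$ plus Wilson's description of the K\"ahler cone, a fixed-locus analysis, or the structure results of the earlier papers) are not carried out, and without them neither the ``moreover'' clause of (i) nor the list in (ii) is established. Note also that reducing to $\det h=1$ is not harmless for a statement about every individual $h$: a finite-order element of determinant $-1$ would have eigenvalues $1,1,-1$, which (ii) also forbids — though the paper makes the very same reduction.

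That said, you have put your finger on exactly the point that the paper's proof glosses over: its ``direct calculation'' gives $\lambda+\bar\lambda\in\{-2,-1,0,1,2\}$, and the value $-2$, i.e.\ $\lambda=-1$, is silently dropped from the printed list; integrality and cone-preservation alone do not exclude it, since $\mathrm{diag}(1,-1,-1)$ does preserve suitable salient full-dimensional cones. The two subcases behave differently. If $h$ has infinite order with eigenvalues $1,-1,-1$ (a Jordan block at $-1$), then $h^2$ is unipotent with a single $2\times 2$ Jordan block, and the Hodge-index/Lefschetz computation in the proof of Lemma \ref{lem:jordanform} — which uses nothing about $g$ beyond membership in $\mcal A(X)$ — rules this out when applied to $h^2$; this is the clean version of what you gesture at for part (i). The finite-order subcase, an involution acting as $\mathrm{diag}(1,-1,-1)$, is handled neither by your sketch nor by the paper's proof of this proposition; it can be excluded a posteriori, since invariance of $c_2(X)$ forces its $(-1)$-eigenplane to be $c_2(X)^{\perp}$, and by Propositions \ref{pro:uvwBigger} and \ref{pro:jordan2} that plane contains a nonzero nef class, which the involution would send to its negative, contradicting that $\Nef(X)$ contains no line — but this uses the later structure results and is not available at this point of the argument. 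So your proposal is incomplete at its self-identified Step 3, and closing it requires the Lemma \ref{lem:jordanform}-type computation for the infinite-order half together with an additional geometric input of the above kind for the involution half, neither of which your sketch supplies.
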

\begin{proof}
Let $h^*$ denote the dual action of $h$ on $H^4(X,\Z)$. Since $h^*$ preserves the second Chern class $c_2(X)\in H^4(X,\Z)$, one of its eigenvalues is $1$, 
and therefore $h$ also has an eigenvector $w$ with eigenvalue $1$. Since $h$ acts on the nef cone $\Nef(X)$, by the Birkhoff-Frobenius-Perron theorem \cite{Bir67} there exist $\alpha\geq1$ and 
$v\in\Nef(X)\setminus\{0\}$ such that $hv=\alpha v$. As $\det h=1$, if $\alpha>1$, then the remaining eigenvalue of $h$ is $1/\alpha$. 

Assume that $\alpha=1$. Then by the Birkhoff-Frobenius-Perron theorem, all eigenvalues of $h$ have absolute value 1. Thus the characteristic polynomial of $h$ reads 
$$ \Phi_h(t) = (t-1)(t-\lambda)(t-\bar\lambda)$$
with $\vert \lambda \vert = 1$. Since $\Phi_h$ has integer coefficients, a direct calculation gives $\lambda\in\{1,\pm i,\pm (\frac12 \pm i\frac{\sqrt 3}2)\}$. When $\lambda\neq1$, it is easily checked that $h$ has finite order. 

Finally, if $\lambda=1$, then the Jordan form of $h$ is 
$$\text{either}\quad
\left(
\begin{array}{ccc}
1 & 1 & 0 \\
0 & 1 & 1 \\
0 & 0 & 1
\end{array}
\right)
\quad\text{or}\quad
\left(
\begin{array}{ccc}
1 & 1 & 0 \\
0 & 1 & 0 \\
0 & 0 & 1
\end{array}
\right).$$
In both cases it is clear that $h$ has infinite order.
\end{proof}

In the following two sections, we fix an element of infinite order as in Lemma \ref{inf} and analyse separately the cases $\alpha>1$ and $\alpha=1$ as in Proposition \ref{pro:uvw}(i). 
\section{The case $\alpha>1$}

\begin{pro}\label{pro:uvwBigger}
Under Assumption \ref{assumption} and in the notation from Proposition \ref{pro:uvw} for $h=g$, assume that $\alpha>1$. Then $u$ and $v$ are nef and irrational, we have 
\begin{equation}\label{eq:relations}
u^3=v^3=u^2v=uv^2=u^2w=uw^2=v^2w=vw^2=0,
\end{equation} 
and the plane $\R u+\R v$ is in the kernel of the linear form given by $c_2(X)\in H^4(X,\Z)$.
\end{pro}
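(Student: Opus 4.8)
The plan is to exploit the eigenvector structure from Proposition \ref{pro:uvw} together with the fact that the cubic form $C$ is $g$-invariant (since $\det g = 1$ and $g$ acts by automorphisms, it preserves the intersection product) and defined over $\Q$. Fix a basis by choosing eigenvectors $u, v, w$ of $g$ with $gu = \frac1\alpha u$, $gv = \alpha v$, $gw = w$; since $\alpha > 1$ and $\alpha$ is an algebraic number of degree at most $3$ over $\Q$ satisfying that $\alpha, 1/\alpha, 1$ are the roots of the (rational, monic) characteristic polynomial, $\alpha$ cannot be rational — otherwise all three eigenvalues would be rational and each eigenline would be rational, but a rational eigenline $\R u$ would force $\alpha \in \Q$ to be a unit in $\Z$, i.e.\ $\alpha = 1$, contradiction. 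Hence $u$ and $v$ span irrational lines. First I would establish that $u$ and $v$ are nef: $v$ is nef by construction (Birkhoff--Frobenius--Perron applied to $g$), and $u$ is an eigenvector for the eigenvalue $1/\alpha$ of $g$, equivalently the Perron eigenvector of $g^{-1}$, so applying the same theorem to $g^{-1}$ acting on $\Nef(X)$ gives that $u$ can be chosen nef.

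Next I would derive the intersection relations \eqref{eq:relations} by the standard weight argument. For any monomial $m = x^a y^b z^c$ with $a + b + c = 3$ in the coordinates dual to $u, v, w$, invariance of $C$ under $g$ forces $C(gu, gv, gw)$-type identities; concretely, writing $C$ in the eigenbasis, the coefficient of $u^a v^b w^c$ in $C$ gets multiplied by $\alpha^{b-a}$ when we apply $g$, so all coefficients with $b \neq a$ must vanish. The surviving monomials are therefore only those with $a = b$, namely $w^3$, $uvw$, and (from $a = b$, $a+b+c=3$, so $a=b\le 1$) nothing with $a = b \ge 2$ — wait, $a=b=1,c=1$ gives $uvw$, and $a=b=0,c=3$ gives $w^3$; there is no monomial with $a=b$ and $a \ge 2$ since that needs $c = 3 - 2a < 0$. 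Hence $C = \mu\, w^3 + \nu\, uvw$ for constants $\mu, \nu$, which is exactly the assertion that all eight listed triple products $u^3, v^3, u^2v, uv^2, u^2w, uw^2, v^2w, vw^2$ vanish. (One must be slightly careful that $u, v, w$ form a basis of $N^1(X)_\R$: if $\alpha > 1$ the three eigenvalues $1/\alpha, \alpha, 1$ are distinct, so the eigenvectors are automatically linearly independent.)

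Finally, for the statement about $c_2(X)$: the linear form $\ell = c_2(X) \cdot (-)$ on $N^1(X)_\R$ is $g$-invariant because $g^*$ fixes $c_2(X) \in H^4(X,\Z)$, as used in the proof of Proposition \ref{pro:uvw}. Writing $\ell = p\, u^\vee + q\, v^\vee + s\, w^\vee$ in the dual basis and applying $g$-invariance, the coefficient of $u^\vee$ scales by $\alpha$ and that of $v^\vee$ by $1/\alpha$ under the dual action, so $p = q = 0$ (using $\alpha \neq 1$); hence $\ell$ vanishes on $\R u + \R v$, which is the claim. I expect the main obstacle to be the irrationality assertion: one needs to argue carefully that $\alpha \notin \Q$, using that a rational eigenvalue of an integer matrix with determinant $1$ acting on a lattice, with rational eigenline, must be $\pm 1$ (it is an algebraic integer that is a unit, hence its inverse is also an algebraic integer, hence $\pm1$), together with the fact that if $\alpha \in \Q$ then \emph{all} eigenlines would be rational — so the irrationality of one eigenline and $\alpha \in \Q$ are incompatible, forcing $u, v$ to span irrational lines when $\alpha > 1$. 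Everything else is the weight/grading bookkeeping that reduces to checking which degree-$3$ monomials are $g$-invariant.
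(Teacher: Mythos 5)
Your proposal is correct and follows essentially the same route as the paper: the relations \eqref{eq:relations} and the vanishing of $c_2(X)$ on $\R u+\R v$ come from exactly the same eigenvalue-scaling argument using invariance of the intersection form and of $c_2(X)$. The only (harmless) variations are that you obtain nefness of $u$ by applying the Birkhoff--Perron--Frobenius theorem to $g^{-1}$ acting on the nef cone rather than taking the paper's limit of $\alpha^{-n}|y|^{-1}g^{-n}H$ for a general ample $H$, and you deduce irrationality of $u$ and $v$ from the irrationality of $\alpha$ (rational root/unit argument, using that $1/\alpha$ is also an eigenvalue) instead of the paper's primitivity argument; both variants are valid.
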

\begin{proof}
We first need to show that the eigenspace of $1/\alpha$ intersects $\Nef(X)$. Pick $u\neq0$ such that $gu=\frac1\alpha u$, 
and note that $u,v$ and $w$ form a basis of $N^1(X)_\R$. Take a general ample class 
$$H=xv+yu+zw,$$ 
and observe that $y\neq0$ by the general choice of $H.$ Then $g^{-n}H$ is ample for every positive integer $n$, hence the divisor
$$\lim_{n\to\infty}\frac1{\alpha^n|y|}g^{-n}H=\lim_{n\to\infty}\Big(\frac x{\alpha^{2n}|y|}v+\frac y{|y|}u+\frac z{\alpha^n |y|}w\Big)=\frac y{|y|}u$$
is nef. Now replace $u$ by $yu/|y|$ if necessary to achieve the nefness of $u.$ 

Furthermore, since $v^3=(gv)^3=\alpha^3 v^3$, we obtain $v^3=0$; other relations in 
\eqref{eq:relations} are proved similarly. Also, 
$$v\cdot c_2(X)=gv\cdot gc_2(X)=\alpha v\cdot c_2(X),$$
hence $v\cdot c_2(X)=0$, and analogously $u\cdot c_2(X)=0$. 

Finally, assume that $v$ is rational. By replacing $v$ by a rational multiple, we may assume that $v$ is a primitive element of $N^1(X)$. But the eigenspace associated to $\alpha$ is $1$-dimensional, and since $gv$ is also primitive, we must have $gv=v$, a contradiction. Irrationality of $u$ is proved in the same way. 
\end{proof}

\begin{pro}\label{pro:cubic}
Under Assumption \ref{assumption} and in the notation of Proposition \ref{pro:uvw} for $h=g$, assume that $\alpha>1$. Let $L$ be the linear form on $N^1(X)_\R$ given by $c_2(X)$. 

Then one of the following holds:
\begin{enumerate}
\item[(i)] $C=L_1L_2L$, where $L_1$ and $L_2$ are irrational linear forms such that 
$$L_1\cap L_2=\R w,\quad L_1\cap L=\R u,\quad L_2\cap L=\R v;$$
\begin{figure}[htb]
\begin{center}
\includegraphics[width=0.44\textwidth]{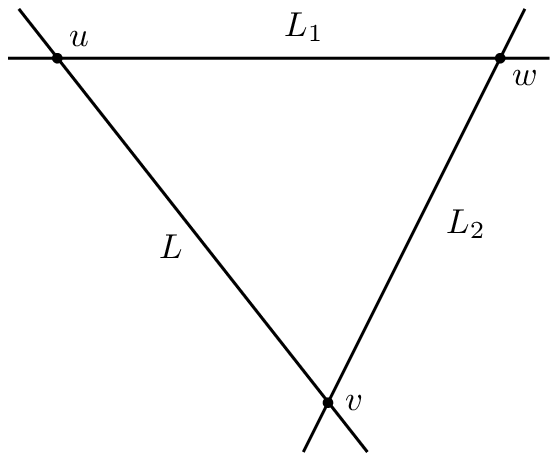}
\end{center}
\end{figure}
\item[(ii)] $C=QL$, where $Q$ is an irreducible quadratic form. Then 
$$Q\cap L=\R u\cup\R v,$$ 
and the planes $\R u+\R w$ and $\R v+\R w$ are tangent to $Q$ at $u$ and $v$ respectively.
\begin{figure}[htb]
\begin{center}
\includegraphics[width=0.44\textwidth]{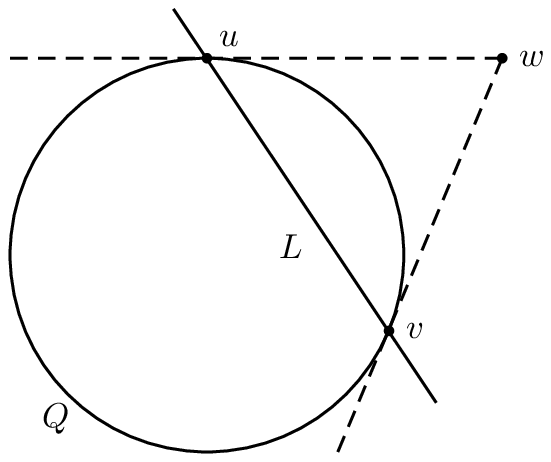}
\end{center}
\end{figure}
\end{enumerate}
\end{pro}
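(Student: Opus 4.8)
The starting point is the observation from Proposition \ref{pro:uvwBigger} that the linear form $L$ vanishes on the plane $\R u + \R v$, together with the relations \eqref{eq:relations}, which say that the cubic form $C$ vanishes on all monomials built from $\{u,v\}$ alone and on all "mixed" monomials involving $w$ to the first or second power in the plane $\R u + \R w$ or $\R v + \R w$. Writing $C$ in coordinates with respect to the basis $(u,v,w)$, these vanishing relations force almost all coefficients of $C$ to be zero: the only monomials that can survive are $uvw$ and $w^3$. So $C = a\, uvw + b\, w^3$ for suitable reals $a,b$, where I abuse notation and write $u,v,w$ also for the dual linear coordinates. Since $C$ is not identically zero (as $X$ is a threefold and $c_2(X)\neq 0$, so in particular some intersection number is nonzero; more carefully, the cubic form of a Calabi--Yau threefold is never zero because an ample class has positive cube), we have $(a,b)\neq(0,0)$. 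In fact $a\neq 0$: if $a=0$ then $C=b\,w^3$, whose zero locus is the single plane $\{w=0\}$ with multiplicity three, and then $C$ would vanish on all of $\R u + \R v + (\text{anything in }\{w=0\})$, but more to the point the nef cone, which is three-dimensional with nonempty interior, would have to avoid $\{w=0\}$ in its interior while $u,v$ both lie in $\{w=0\}$ and are nef — one checks this is incompatible with $u,v$ being irrational limits of ample classes as in the proof of Proposition \ref{pro:uvwBigger}, since ample classes have positive cube. Hence $a\neq 0$, and rescaling we may take $C = uvw + b\,w^3 = w(uv + b w^2)$.

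Now factor according to whether the quadratic form $Q \deq uv + bw^2$ is reducible or irreducible over $\R$. If $Q$ is reducible, then $Q = L_1 L_2$ for linear forms $L_1,L_2$, and $C = L_1 L_2 L$ with $L$ the linear form $w$ (recall $L$ cuts out $\{w=0\}$ since $L$ vanishes on $\R u + \R v$ and is nonzero — here one uses $c_2(X)\neq 0$). To pin down the intersections: $L = (w=0)$ contains both $\R u$ and $\R v$ by construction; $L_1 L_2 = Q = (uv + bw^2 = 0)$ meets $L=(w=0)$ exactly in $(uv=0) = \R u \cup \R v$; and since $L_1, L_2$ each pass through the origin in $\R^2 = \{w = 0\}^\perp$-complement picture, after relabelling $L_1 \cap L = \R u$ and $L_2 \cap L = \R v$, while $L_1 \cap L_2$ is the remaining coordinate axis $\R w$. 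Irrationality of $L_1, L_2$ follows from irrationality of $u$ and $v$ (Proposition \ref{pro:uvwBigger}): if $L_1$ were rational, its trace on $\{w=0\}$ would be a rational line, but that line is $\R u$, contradicting irrationality of $u$. This gives case (i). If instead $Q$ is irreducible, we are in case (ii) with $C = QL$, $Q = uv + bw^2$: then $Q \cap L = Q\cap (w=0) = (uv = 0) = \R u \cup \R v$ as before. For the tangency claim, compute the tangent plane to the surface $(Q=0)$ at the point $u$: the gradient of $uv + bw^2$ at $u$ (which has coordinates $(1,0,0)$) is proportional to the covector dual to $v$, so the tangent plane at $u$ is $(v = 0) = \R u + \R w$; symmetrically the tangent plane at $v$ is $(u=0) = \R v + \R w$.

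The main obstacle is the step showing $a \neq 0$, i.e.\ ruling out $C = b w^3$: the relations \eqref{eq:relations} by themselves do not exclude it, and one genuinely needs the positivity input — that $u$ and $v$ are nef limits of ample classes and that ample classes have strictly positive self-intersection — to see that $C$ cannot be supported on a single plane. Everything after that is elementary linear and quadratic algebra in three variables: diagonalizing or factoring the binary-plus-$bw^2$ quadratic, reading off intersection loci, and computing gradients for the tangency statement. I would also be careful to justify at the outset that $L\neq 0$, i.e.\ that $c_2(X)$ genuinely gives a nonzero linear form on $N^1(X)_\R$, which is exactly Assumption \ref{assumption}'s hypothesis $c_2(X)\neq 0$ together with the non-degeneracy of the pairing between $H^2$ and $H^4$.
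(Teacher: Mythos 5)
Your reduction of $C$ (the only monomials surviving the relations \eqref{eq:relations} are $uvw$ and $w^3$), the factorization $C=QL$, the case split according to whether the residual quadric is reducible, and the computations of $Q\cap L$, of the tangent planes, and of the irrationality of $L_1,L_2$ all match the paper's argument (the paper writes $C=z(Az^2+6Bxy)$ with $A=w^3$, $B=uvw$, and splits into $A=0$ and $A\neq 0$). However, the step you yourself single out as the main obstacle --- showing $a\neq 0$, i.e.\ $uvw\neq 0$ --- is exactly where your proposal has a genuine gap. The incompatibility you invoke is not a contradiction: if $C=b\,w^3$ with $b\neq 0$, the ample cone could simply lie in the open half-space where the linear coordinate cutting out $\R u+\R v$ is positive, so every ample class still has positive cube, while the nef classes $u,v$ sit on the boundary plane where $C$ vanishes --- which is no anomaly at all, since $u^3=v^3=0$ holds in any case and nef boundary classes with vanishing cube lying on a plane avoided by the interior of the nef cone is precisely the generic picture. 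So convex geometry of $\Nef(X)$ plus positivity of cubes of ample classes cannot rule out $C=b\,w^3$; ``one checks this is incompatible'' is not backed by an actual argument.

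The missing input is the Hodge index theorem, which is how the paper proves $B=uvw\neq 0$: if $uvw=0$, then combined with \eqref{eq:relations} the class $u\cdot v$ pairs to zero against all of $N^1(X)_\R$, and the Hodge index theorem (with an ample class $H$) then forces $H$ and $v$ to be proportional, contradicting $v^3=0<H^3$. Equivalently, $uvw=0$ together with \eqref{eq:relations} gives $H^2\cdot u=0$ for every ample $H$, so on a smooth very ample surface $S$ one has $u|_S\cdot H|_S=0$ with $u|_S$ nef, whence $u|_S=0$ by the Hodge index theorem on $S$ and $u=0$ by the Lefschetz hyperplane theorem --- a contradiction. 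Some such Hodge-theoretic argument is indispensable here and must be added to your proof. A smaller point: in your reducible case you should observe that reducibility of $uv+bw^2$ forces $b=0$ (a reducible ternary quadratic form is degenerate), which is what actually identifies the factors with the coordinate planes and gives $L_1\cap L_2=\R w$; as written, ``the remaining coordinate axis'' is asserted rather than derived, though this is elementary to repair.
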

\begin{proof}
Denote $A=w^3$ and $B=uvw$. We first claim that $B\neq0$. In fact, suppose that $B = 0$ and let $H$ be any ample class. Then the relations \eqref{eq:relations} imply $uv=0$, hence $0=(Huv)^2=(H^2u)\cdot(v^2u)$, 
and the Hodge index theorem \cite[Corollary 2.5.4]{BS95} yields that $H$ and $v$ are proportional, which is a contradiction since $v^3=0$. This proves the claim.

Therefore, for any real variables $x,y,z$ we have
$$(xu+yv+zw)^3=z(Az^2+6Bxy),$$
and thus in the basis $(u,v,w)$ we have $C=QL$, where $Q=Az^2+6Bxy$. We consider two cases.

Assume first that $A=0$. Then $C=6Bxyz$, and we set $L_2=x$ and $L_1=6By$. This gives (i).

If $A\neq0$, then 
$$Q=Az^2+6Bxy=
\left(
\begin{array}{c}
x \\
y \\
z
\end{array}
\right)^t
\left(
\begin{array}{ccc}
0 & 3B & 0 \\
3B & 0 & 0 \\
0 & 0 & A
\end{array}
\right)
\left(
\begin{array}{c}
x \\
y \\
z
\end{array}
\right),$$
and the signature of $Q$ is $(2,1)$. Therefore, $Q$ is a non-empty smooth quadric. It is now easy to see that the tangent plane to $Q$ at $u$ is $(y=0)$, and the tangent plane to $Q$ at $v$ is $(x=0)$. This proves the proposition. 
\end{proof}

\begin{cor}\label{cor:1}
Under Assumption \ref{assumption} and in the notation of Proposition \ref{pro:uvw} for $h=g$, assume that $\alpha>1$. Then $\mcal A(X)$ is an almost abelian group of rank $1$.
\end{cor}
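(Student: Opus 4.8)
The plan is to show that under Assumption \ref{assumption} with $\alpha>1$, every element of $\mcal A(X)$ preserves the configuration of the distinguished rays $\R u$, $\R v$, $\R w$ inside $N^1(X)_\R$, and that the subgroup fixing these three rays pointwise (up to finite index) is abelian of rank at most $1$; combined with Lemma \ref{inf}, which provides the element $g$ of infinite order, this gives that $\mcal A(X)$ is almost abelian of rank exactly $1$. The key observation is that the cubic form $C$ and the linear form $L$ given by $c_2(X)$ are both $\mcal A(X)$-invariant up to the $\GL$-action, so by Proposition \ref{pro:cubic} the singular locus and special linear/quadratic components of $C$, as well as $L$ itself, must be permuted by $\mcal A(X)$.

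First I would treat the two cases of Proposition \ref{pro:cubic} separately. In case (i), $C=L_1L_2L$ is a product of three linear forms; any $h\in\mcal A(X)$ permutes the set $\{L_1,L_2,L\}$ of planes (since $h$ sends $C$ to a scalar multiple of itself and these are its irreducible linear factors), hence permutes the three lines $\R u=L_1\cap L$, $\R v=L_2\cap L$, $\R w=L_1\cap L_2$. But $L$ is distinguished among the three planes as the one cut out by $c_2(X)\neq0$ (which is $\mcal A(X)$-invariant), so $h$ must fix $L$, and therefore fixes the pair $\{\R u,\R v\}$ as a set (these being the two lines obtained by intersecting $L$ with the other two factors) and fixes $\R w$. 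In case (ii), $C=QL$ with $Q$ irreducible, so $h$ must fix both $Q$ and $L$ individually; then $h$ preserves $Q\cap L=\R u\cup\R v$, hence again fixes $\{\R u,\R v\}$ as a set, and since $\R w$ is characterised as, say, the intersection of the two tangent planes to $Q$ at $u$ and $v$, it is fixed too. In both cases, after passing to the index-$\leq2$ subgroup that fixes $\R u$ and $\R v$ individually, every element acts on each of the lines $\R u,\R v,\R w$, so one can apply the argument of Proposition \ref{pro:plane} — or directly: sending $h$ to its eigenvalue $\mu_h$ on $\R u$ gives a homomorphism whose kernel is trivial (an element acting as the identity on $\R u$, $\R v$, $\R w$ is the identity, since these span $N^1(X)_\R$), realising this subgroup as a subgroup of $\R^*$; such a group is either trivial (impossible, as it contains $g$) or, by discreteness coming from the integrality of $N^1(X)$ exactly as in the proof of Proposition \ref{pro:plane}, infinite cyclic.

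I expect the main technical point to be the passage from "$h$ permutes the components/singularities of $C$ and fixes $L$" to "$h$ acts on each distinguished line," and in particular verifying that the eigenvalue homomorphism to $\R^*$ has trivial kernel and discrete (hence cyclic) image. The kernel statement is immediate from $u,v,w$ being a basis of $N^1(X)_\R$; the discreteness is the same $\Z$-lattice argument as at the end of Proposition \ref{pro:plane} (if eigenvalues accumulated at $1$, a generic integral class would be moved less than its distance to the nearest other integral class, forcing it to be fixed, and then the whole map to be the identity for $i\gg0$). A minor subtlety is keeping track of the finite index and finite kernel bookkeeping — the kernel of $r\colon\Aut(X)\to\GL(N^1(X))$ is finite by \cite[Proposition 2.4]{Og12}, the passage to $\SL$ costs index $\leq2$, fixing the permuted rays individually costs another bounded index — but none of this affects the conclusion that $\Aut(X)$, and hence $\mcal A(X)$, is almost abelian of rank $1$.
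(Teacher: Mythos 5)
Your proposal follows essentially the same route as the paper: use the $h$-invariance of $C$, of $L=c_2(X)^\perp$, of $\Sing(C)$ and of the tangent planes to $Q$ to show that every $h\in\mcal A(X)$ fixes $\R w$ and permutes $\{\R u,\R v\}$, then pass to a finite-index subgroup and reduce to a discrete subgroup of $\R^*$, which is infinite cyclic. The only difference is the endgame: the paper shows $hw=w$ (via Proposition \ref{pro:uvw}, since $-1$ is never an eigenvalue), observes that $h\mapsto h|_L$ is then injective into $\GL(2,\Z)$, and quotes Proposition \ref{pro:plane}, whereas you rerun the eigenvalue-plus-lattice-discreteness argument directly in dimension $3$; both work, and yours just re-proves the mechanism of Proposition \ref{pro:plane} one dimension up.

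One step is stated too quickly: you claim the kernel of $h\mapsto\mu_h$ (the eigenvalue on $\R u$) is trivial ``since $u,v,w$ span $N^1(X)_\R$'', but that spanning fact only shows that an element fixing $u$, $v$ and $w$ is the identity; it does not by itself show that $\mu_h=1$ forces the eigenvalues on $\R v$ and $\R w$ to equal $1$. The missing (easy) argument: $\R w$ is rational and $h$ is integral, so $hw=\pm w$, and $hw=w$ because $-1$ is not an eigenvalue of any element of $\mcal A(X)$ by Proposition \ref{pro:uvw} (alternatively, $\mu_h,\nu_h>0$ since $h$ preserves the nef cone and $u,v$ are nef, and $\det h=1$ then forces the eigenvalue on $\R w$ to be $+1$); combined with $\det h=1$ this gives $\nu_h=1/\mu_h$, so $\mu_h=1$ indeed implies $h=\id$. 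With that one-line patch your argument is complete.
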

\begin{proof}
First note that every element $h\in\mcal A(X)$ fixes the cubic $C$ and the plane $L=c_2(X)^\perp$. Further, the singular locus $\Sing(C)$ of $C$ is $h$-invariant. In the case (i) of Proposition \ref{pro:cubic}, $\Sing(C)=\R u\cup\R v\cup\R w$. This implies that the set 
$$\R u\cup \R v\subseteq L$$
is $h$-invariant, and hence so is $\R w$. In particular, the sets $\R u$, $\R v$, $\R w$ are each $h^2$-invariant. Then Proposition \ref{pro:uvw} immediately shows that $hw=w$, and hence the map 
$$ \mcal A(X) \to \GL(2,\mathbb Z), \quad  h \mapsto h|_L$$
is injective. Now the claim follows from Proposition \ref{pro:plane}. 

In the case (ii) of Proposition \ref{pro:cubic}, we have $ \Sing(C) = \R u\cup\R v\subseteq L$, and $\R w$ is $h$-invariant as it is the intersection of tangent planes to $Q$ at $u$ and $v$. Now we conclude similarly as above. 
\end{proof}
\section{The case $\alpha=1$}

\begin{lem}\label{lem:jordanform}
Under Assumption \ref{assumption} and in the notation of Proposition \ref{pro:uvw} for $h=g$, assume that $\alpha=1$. Then the Jordan form of $g$ is
\begin{equation}\label{eq:jordan}
\left(
\begin{array}{ccc}
1 & 1 & 0 \\
0 & 1 & 1 \\
0 & 0 & 1
\end{array}
\right).
\end{equation}  
In particular, the eigenspace of $g$ associated to the eigenvalue $1$ has dimension $1$.
\end{lem}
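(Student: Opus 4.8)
The plan is to rule out the second Jordan form from Proposition \ref{pro:uvw}(i), namely the possibility that $g$ is unipotent with a single Jordan block of size $2$ plus a Jordan block of size $1$. Recall that we are under Assumption \ref{assumption}, so $c_2(X)\neq 0$, and we are in the case $\alpha=1$, so by Proposition \ref{pro:uvw}(i) the only eigenvalue of $g$ is $1$. The strategy is to show that if the eigenspace of $g$ for the eigenvalue $1$ were $2$-dimensional, then $g$ would have finite order, contradicting $\langle g\rangle\simeq\Z$.

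First I would set up coordinates adapted to the hypothetical Jordan form
$$\left(\begin{array}{ccc}1 & 1 & 0 \\ 0 & 1 & 0 \\ 0 & 0 & 1\end{array}\right),$$
so that there is a basis $e_1,e_2,e_3$ of $N^1(X)_\R$ with $ge_1=e_1$, $ge_2=e_1+e_2$, $ge_3=e_3$; thus the fixed subspace is $\langle e_1,e_3\rangle$ and $g$ acts as a nontrivial transvection. The key idea is to exploit that $g$ preserves the cubic form $C$ coming from the intersection product, that $C$ is a nonzero cubic (indeed for any ample $H$ we have $H^3>0$), and that $g^*$ preserves $c_2(X)\in H^4(X,\Z)$. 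I would write out the condition $C(x_1,x_2,x_3)=C(g\cdot(x_1,x_2,x_3))$; expanding, this forces all coefficients of $C$ that involve $x_2$ in a way incompatible with the shear to vanish, pinning down the shape of $C$ considerably — concretely, $C$ must be a polynomial in $x_1,x_3$ together with controlled $x_2$-terms, and matching coefficients shows the partial derivative of $C$ in the direction $e_1$ (the ``new'' direction produced by the shear) is annihilated, i.e. $e_1$ lies in the singular locus of every level set, or more precisely the relevant bilinear/cubic contractions with $e_1$ degenerate.

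Next, I would combine this with the Hodge index theorem exactly as in the proof of Proposition \ref{pro:cubic}: the constraints from $g$-invariance of $C$ plus $g$-invariance of the linear form $L$ given by $c_2(X)$ will be shown to force either $c_2(X)=0$ in $H^4(X,\R)$, contradicting Assumption \ref{assumption}, or a degeneration of the intersection form contradicting the Hodge index theorem (applied to an ample class and an appropriate element of the fixed plane), or the triviality of the shear. In more detail, the fixed plane $\langle e_1,e_3\rangle$ must contain a nef class; intersecting that plane with $L=c_2(X)^\perp$ and using that a nef class $D$ with $D\cdot c_2(X)=0$ and $D^3=0$ is heavily constrained (Miyaoka-type / Hodge index arguments, as already invoked in the excerpt) yields the contradiction. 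Alternatively, and perhaps more cleanly, one observes that $g^*$ acting on $H^4(X,\Z)$ is the transpose-inverse-type action dual to $g$ on $N^1(X)$; its Jordan form must also be of one of the two shapes, and compatibility of the pairing $N^1(X)\times H^4(X,\Z)\to\Z$ with $c_2(X)$ being a fixed vector not in the image of $g^*-\id$ forces the rank-$1$ Jordan block, which dualizes back to the single large block \eqref{eq:jordan} on $N^1(X)$.

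The main obstacle I anticipate is the bookkeeping in the cubic-form computation: ensuring that $g$-invariance of $C$ genuinely excludes the split Jordan type rather than merely restricting $C$. The delicate point is that a priori $C$ could be, say, $x_3\cdot Q(x_1,x_3)$ or $x_1^3$ or similar degenerate cubics that are invariant under the transvection in the $e_2$-direction; I must rule each of these out using that $C$ is the intersection cubic of a Calabi-Yau threefold, so $H^3>0$ for ample $H$ and, crucially, $B:=$ a suitable triple intersection is nonzero by the same Hodge-index argument as in Proposition \ref{pro:cubic} — this is what prevents $C$ from being a product of linear forms all vanishing on the fixed plane. Once the nonvanishing of the appropriate intersection number is in hand, the invariance equations become rigid enough to eliminate the split form, leaving \eqref{eq:jordan} and hence a one-dimensional eigenspace.
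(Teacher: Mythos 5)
Your central computation is essentially the paper's proof: writing $ge_1=e_1$, $ge_2=e_1+e_2$, $ge_3=e_3$ and imposing $C(g^nx)=C(x)$ for all $n$ does force every triple product containing $e_1$ to vanish, so $e_1\cdot D\cdot D'=0$ for all classes $D,D'$; then for a smooth very ample $H$ one has $e_1^2\cdot H=e_1\cdot H^2=0$, the Hodge index theorem on the surface $H$ gives $e_1|_H=0$, and the Lefschetz hyperplane theorem gives $e_1=0$, a contradiction. This is exactly how the paper argues, and note that neither $c_2(X)\neq 0$ nor the $g$-invariance of $L=c_2(X)^{\perp}$ is needed here; your phrase ``Hodge index applied to an ample class and an appropriate element of the fixed plane'' is the right idea, but the step from the degenerate contractions with $e_1$ to $e_1=0$ should be spelled out via the surface restriction plus Lefschetz just described.

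Two ingredients of your plan are, however, genuinely wrong and should be discarded. First, the announced strategy --- that a two-dimensional eigenspace would make $g$ of finite order --- cannot work: a nontrivial transvection in $\GL(3,\Z)$ has infinite order (the proof of Proposition \ref{pro:uvw} explicitly notes that both unipotent Jordan types have infinite order), so no order-theoretic contradiction is available and the contradiction must be geometric, as your main computation in fact provides. Second, the ``perhaps more cleanly'' dual argument on $H^4(X,\Z)$ proves nothing: the transpose-inverse of the split unipotent form is again of split type, $c_2(X)$ is a fixed vector of $g^*$ for either Jordan type, and there is no reason why $c_2(X)$ should avoid $\mathrm{im}(g^*-\mathrm{id})$ --- in the split case that image is itself spanned by a fixed vector --- so no constraint on the Jordan type is obtained. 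Likewise, the suggested route via ``a nef class $D$ with $D\cdot c_2(X)=0$ and $D^3=0$ is heavily constrained'' yields no contradiction, since such classes genuinely occur on these threefolds (for instance $u$ and $v$ in the case $\alpha>1$). With those branches deleted and the Hodge-index/Lefschetz conclusion made precise, your first route is complete and coincides with the paper's argument.
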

\begin{proof} By Proposition \ref{pro:uvw}, $\alpha = 1$ is the unique eigenvalue of $g$. 
Therefore the Jordan form of $g$ is either of the form  \eqref{eq:jordan} or of the form
\begin{equation}\label{eq:matrix}
\left(
\begin{array}{ccc}
1 & 1 & 0 \\
0 & 1 & 0 \\
0 & 0 & 1
\end{array}
\right).
\end{equation}  
Assume that the Jordan form is of the form \eqref{eq:matrix}; in other words, there is a basis $(u_1,u_2,u_3)$ of $N^1(X)_\R$ such that 
$$gu_1=u_1,\quad gu_2=u_1+u_2,\quad gu_3=u_3.$$
Clearly,
$$g^nu_2=u_2+nu_1$$
for every integer $n$, and furthermore,
$$u_2^3=(g^nu_2)^3=u_2^3+3nu_2^2u_1+3n^2u_2u_1^2+n^3u_1^3.$$
This gives
\begin{equation}\label{eq:1}
u_2^2u_1=u_2u_1^2=u_1^3=0.
\end{equation}
Similarly, from the equations 
$$u_2^2u_3=(g^nu_2)^2g^nu_3\quad\text{and}\quad u_2u_3^2=g^nu_2(g^nu_3^2)^2$$ 
we get
\begin{equation}\label{eq:2}
u_1^2u_3=u_1u_3^2=u_1u_2u_3=0.
\end{equation}
For any smooth very ample divisor $H$ on $X$, \eqref{eq:1} and \eqref{eq:2} give $u_1^2 \cdot  H = u_1 \cdot H^2 = 0,$ thus 
$(u_1|_H)^2=0$ and $u_1|_H\cdot H|_H=0$, and hence $u_1|_H=0$, applying the Hodge index theorem on $H.$ 
This implies $u_1=0$ by the Lefschetz hyperplane section theorem, a contradiction. Thus  the Jordan form cannot be of type \eqref{eq:matrix}, and the assertion is proved. 
\end{proof}

\begin{pro}\label{pro:jordan2}
Under Assumption \ref{assumption} and in the notation of Proposition \ref{pro:uvw} for $h=g$, assume that $\alpha=1$. Then, possibly by rescaling $w$, there exist $w_1,w_2\in N^1(X)$ such that $(w,w_1,w_2)$ is a basis of $N^1(X)_\R$ with respect to the Jordan form \eqref{eq:jordan}, and we have
\begin{equation}\label{eq:3}
w\cdot c_2(X)=w_1\cdot c_2(X)=w^2=w_1^3=ww_1^2=ww_1w_2=0
\end{equation}
and
\begin{equation}\label{eq:4}
ww_2^2=2w_1w_2^2=-2w_1^2w_2 \ne 0.
\end{equation}
\end{pro}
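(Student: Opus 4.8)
The plan is to first install a convenient integral realization of the Jordan block, then extract every intersection number by letting $g$ act on the cubic form $C$ and on the linear form $L(D)=D\cdot c_2(X)$, and finally to obtain the non-vanishing in \eqref{eq:4} from the Hodge index theorem. For the first part, observe that by Lemma \ref{lem:jordanform} the operator $N:=g-\id$ on $N^1(X)$ satisfies $N^3=0\neq N^2$. I would choose $w_2\in N^1(X)$ with $N^2w_2\neq0$ and set $w_1:=Nw_2\in N^1(X)$; then $N^2w_2=Nw_1$ is a nonzero integral eigenvector for the eigenvalue $1$, and since that eigenspace is one-dimensional it is a scalar multiple of the class $w$ of Proposition \ref{pro:uvw}, so after rescaling $w$ we may assume $w=N^2w_2$. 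Then $gw=w$, $gw_1=w+w_1$, $gw_2=w_1+w_2$, which is exactly a Jordan basis of type \eqref{eq:jordan} with $w_1,w_2\in N^1(X)$, and the formula $g^n=\id+nN+\binom{n}{2}N^2$ gives $g^nw=w$, $g^nw_1=w_1+nw$, $g^nw_2=w_2+nw_1+\binom{n}{2}w$ for all $n\in\Z$.

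Next I would read off the relations. The form $L$ is $g$-invariant because $g$ preserves $c_2(X)$; applying $L$ to $gw_1=w+w_1$ gives $w\cdot c_2(X)=0$, and then applying it to $gw_2=w_1+w_2$ gives $w_1\cdot c_2(X)=0$. For the cubic part I would use that the triple intersection form is $g$-invariant, so $(g^na)\cdot(g^nb)\cdot(g^nc)=a\cdot b\cdot c$ for every $n$, and expand the four identities $(g^nw_1)^3=w_1^3$, $(g^nw_1)^2(g^nw_2)=w_1^2w_2$, $(g^nw_1)(g^nw_2)^2=w_1w_2^2$, $(g^nw_2)^3=w_2^3$ as polynomials in $n$, equating coefficients at each stage. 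This produces, in order, $w^3=w^2w_1=ww_1^2=0$; then $w^2w_2=0$ and $w_1^3=-2ww_1w_2$; then $w_1^3=-3ww_1w_2$ together with $2w_1^2w_2-ww_1w_2+ww_2^2=0$; and finally $ww_2^2=2w_1w_2^2$ and $ww_2^2=-2w_1^2w_2$. Comparing the two expressions for $w_1^3$ forces $ww_1w_2=0$, hence $w_1^3=0$, which together with the earlier vanishings gives all of \eqref{eq:3}; here the entry ``$w^2=0$'' is to be read in $H^4(X,\R)$ and follows from $w^3=w^2w_1=w^2w_2=0$, since for a Calabi--Yau threefold $N^1(X)_\R=H^2(X,\R)$ and the cup product $H^2\times H^4\to H^6=\R$ is a perfect pairing. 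The chain of equalities $ww_2^2=2w_1w_2^2=-2w_1^2w_2$ in \eqref{eq:4} is then precisely what the last two expansions produced.

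It remains to show $ww_2^2\neq0$, and here I would argue by contradiction. If $ww_2^2=0$, then the equalities already obtained force $w_1w_2^2=w_1^2w_2=0$ as well, so every triple product among $w,w_1,w_2$ except possibly $w_2^3$ vanishes and in the basis $(w,w_1,w_2)$ the cubic form reads $C(xw+yw_1+zw_2)=z^3\,w_2^3$. Since $H^3>0$ for an ample class $H$, we get $w_2^3\neq0$ and every ample class has nonzero $w_2$-coordinate. But then, for ample $H$, the symmetric bilinear form $(D,D')\mapsto H\cdot D\cdot D'$ on $N^1(X)_\R$ has in this basis a single nonzero entry and hence rank $1$, contradicting the Hodge index theorem, which (using again $N^1(X)_\R=H^2(X,\R)$) says this form is nondegenerate of signature $(1,2)$. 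This contradiction gives $ww_2^2\neq0$ and finishes the proof.

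The bulk of the work is the polynomial-in-$n$ bookkeeping in the middle step, which is routine but the obvious place for sign errors; the one genuinely geometric ingredient, and the step I would treat as the real obstacle, is the Hodge-index argument establishing $ww_2^2\neq0$.
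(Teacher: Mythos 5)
Your proposal is correct and follows essentially the same route as the paper: the same Jordan basis $w=(g-\id)^2w_2$, $w_1=(g-\id)w_2$, the same expansion of $g$-invariant intersection numbers in $n$ to obtain \eqref{eq:3} and \eqref{eq:4}, and a Hodge-index argument for the non-vanishing. The only (harmless) deviation is at that last step, where the paper restricts to a smooth very ample surface and uses the surface Hodge index theorem together with the Lefschetz hyperplane theorem, while you invoke the signature $(1,2)$ of the form $(D,D')\mapsto H\cdot D\cdot D'$ on $H^2(X,\R)$ directly; both are valid.
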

\begin{proof}
Pick any $w_2\in N^1(X)$ such that $w_1 :=(g-\id)w_2\neq0$ and $u:=(g-\id)^2w_2\neq0$, which is possible by Lemma \ref{lem:jordanform}.
Then
$$gu=u,\quad gw_1=u+w_1,\quad gw_2=w_1+w_2,$$
and it is easy to check that $(u,w_1,w_2)$ is a basis of $N^1(X)_\R$. Since the eigenspace associated to the eigenvalue $1$ of $g$ is $1$-dimensional by Lemma \ref{lem:jordanform}, by Proposition \ref{pro:uvw} we may assume that $u=w$. We first observe that 
$$g^nw_1=w_1+nw\quad\text{and}\quad g^nw_2=w_2+nw_1+\frac{n(n-1)}{2}w$$
for any integer $n$. Then the equations 
$$w_1\cdot c_2(X)=g^nw_1\cdot c_2(X)\quad\text{and}\quad w_2\cdot c_2(X)=g^nw_2\cdot c_2(X)$$ 
give 
$$w\cdot c_2(X)=w_1\cdot c_2(X)=0.$$
Similarly, from $w_1^3=(g^nw_1)^3$ and $ww_2^2=(g^nw)(g^nw_2)^2$ we get 
$$ww_1^2=ww_1w_2= w^2=0,$$ 
and $w_1^2w_2=(g^nw_1)^2(g^nw_2)$ yields 
$$w_1^3=0.$$ 
Finally, from $w_2^3=(g^nw_2)^3$ we obtain \eqref{eq:4}, up to the non-vanishing statement. Assume that $ww_2^2 = 0$. Since $w,w_1,w_2$ generate $N^1(X)_\R$, this implies that for any two smooth very ample line bundles $H_1$ and $H_2$ on $X$ we  have $w\cdot H_1\cdot H_2 = 0$, and in particular $w|_{H_1}=0$. But then $w=0$ by the Lefschetz hyperplane section theorem, a contradiction.
\end{proof}

\begin{pro}\label{pro:cubic2}
Under Assumption \ref{assumption} and in the notation of Proposition \ref{pro:uvw} and Proposition \ref{pro:jordan2} for $h=g$, assume that $\alpha=1$.
\begin{enumerate}
\item[(i)] Let $L$ be the linear form on $N^1(X)_\R$ given by $c_2(X)$. Then $C=QL$, where $Q$ is an irreducible quadratic form, and $L$ is tangent to $Q$ at $w$.
\begin{figure}[htb]
\begin{center}
\includegraphics[width=0.365\textwidth]{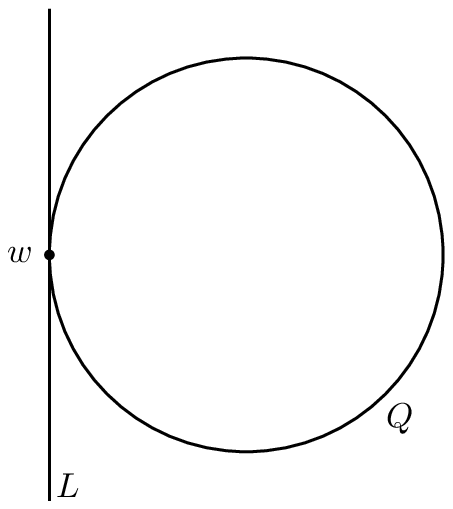}
\end{center}
\end{figure}
\item[(ii)] The automorphism group $\Aut(X)$ is an almost abelian group of rank $1$.
\end{enumerate}
\end{pro}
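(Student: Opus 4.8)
The plan is to prove (i) first, by the same computation already used in Proposition \ref{pro:cubic}. Using the basis $(w,w_1,w_2)$ from Proposition \ref{pro:jordan2}, the relations \eqref{eq:3} and \eqref{eq:4} let us expand $(xw+yw_1+zw_2)^3$ explicitly: every monomial involving $w^2$, $w_1^3$, $ww_1^2$, $ww_1w_2$ vanishes, and the surviving terms are controlled by the single nonzero intersection number $ww_2^2 = 2w_1w_2^2 = -2w_1^2w_2 =: m \neq 0$. One finds that the cubic form $C$ is divisible by the linear form $L = c_2(X)^\perp$. Indeed, $L$ vanishes precisely on the plane $\R w + \R w_1$ (since $w\cdot c_2(X) = w_1\cdot c_2(X) = 0$ and $w_2\cdot c_2(X)\neq 0$, the last being forced because otherwise $c_2(X) = 0$, contrary to Assumption \ref{assumption}), so $L$ is proportional to the coordinate $z$; and one checks directly that $C(x,y,z=0) = y^3\, w_1^3 + \dots = 0$ by \eqref{eq:3}. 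Writing $C = QL$, the quadratic form $Q$ is then read off from the coefficient of $z$ in the expansion; a direct matrix computation shows $Q$ has rank $3$ and hence is irreducible, and that the plane $(z=0) = L$ meets $Q$ only in the doubled line $\R w$, i.e.\ $L$ is tangent to $Q$ at $w$. This is just linear algebra with a $3\times 3$ symmetric matrix, entirely parallel to the case $A\neq 0$ in Proposition \ref{pro:cubic}.

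For (ii), the strategy mirrors Corollary \ref{cor:1}. Every $h\in\mcal A(X)$ preserves the cubic $C$ and the plane $L$, hence preserves $Q$ and its singular behaviour relative to $L$. Since $L$ is tangent to the smooth quadric $Q$ at the single point $[w]\in\PS(N^1(X)_\R)$, that point of tangency is intrinsic, so $\R w$ is $h$-invariant for every $h\in\mcal A(X)$. Thus there is a well-defined character $\chi\colon\mcal A(X)\to\R^*$ with $hw = \chi(h)\,w$, and also an induced action of $\mcal A(X)$ on the quotient $N^1(X)_\R / \R w$, which carries the quadratic form and the image line $L/\R w$; this gives a homomorphism to $\GL(2,\Z)$ (with respect to a suitable lattice basis) acting on a pair of lines — the image of $L\cap Q$ data — so Proposition \ref{pro:plane} applies once we know the relevant subgroup is infinite. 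The remaining point is to rule out that $\mcal A(X)$ is finite, but that is exactly Assumption \ref{assumption} together with Lemma \ref{inf}: $g$ itself has infinite order, so $\mcal A(X)$ is infinite, and we conclude it is almost abelian of rank $1$. Since the kernel of $r\colon\Aut(X)\to\GL(N^1(X))$ is finite, the same holds for $\Aut(X)$.

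The subtle step, and the one I would be most careful about, is the reduction in (ii) to a $\GL(2,\Z)$-action on a union of two lines. In case (ii) of Proposition \ref{pro:cubic} the two singular lines of $C$ provided the two lines directly; here $\Sing(C) = \R w$ is a single line, so the second line has to come from elsewhere. The natural candidate is to pass to the plane $L$ itself: $h$ acts on $L\cong\R^2$ preserving the conic $Q\cap L$, which is the doubled line $\R w$, so that only gives one line again. Instead one should use the action on $N^1(X)_\R/\R w$ together with the line $L/\R w$ and the line cut out by the $g$-Jordan structure (the image of $\R w_1$, which is $h^2$-invariant by an argument as in Corollary \ref{cor:1} applied to the flag preserved by $C$ and $L$); alternatively, restrict attention from the start to the subgroup fixing $w$ and analyse the induced unipotent-versus-semisimple dichotomy via Proposition \ref{pro:uvw}. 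Making this last reduction clean — identifying a genuine second invariant line so that Proposition \ref{pro:plane} can be invoked verbatim — is where the real work lies; everything else is bookkeeping with the explicit cubic.
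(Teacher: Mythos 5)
Part (i) of your proposal is fine and is essentially the paper's computation: expand $(xw+yw_1+zw_2)^3$ using \eqref{eq:3}--\eqref{eq:4}, identify $L$ with $z$, factor $C=QL$, and check that $Q$ has rank $3$ and meets $(z=0)$ in the doubled line $\R w$.

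Part (ii) has a genuine gap, and it is exactly the one you flag yourself: there is no second invariant line, so Proposition \ref{pro:plane} cannot be invoked. After (i), the only distinguished line is $\R w=\Sing(C)$; on the plane $L=\R w+\R w_1$ and on the quotient $N^1(X)_\R/\R w$ the elements of $\mcal A(X)$ act (after passing to a finite-index subgroup) by \emph{unipotent} upper-triangular matrices, and a nontrivial unipotent $2\times 2$ matrix fixes exactly one line; the "image of $\R w_1$'' is invariant only in the quotient, where it coincides with the image of $L$, so your two candidate lines are not two lines. The paper's route is different and is where the real content lies: first, the shape of $C$ from (i) is incompatible with the two shapes in Proposition \ref{pro:cubic}, so no element of $\mcal A(X)$ can have an eigenvalue $\alpha>1$; hence, up to finite index, every $h$ is unipotent upper triangular in the basis $(w,w_1,w_2)$, with entries $a_h$, $c_h$, $d_h$. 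At this stage one only knows that $\mcal A(X)$ embeds into an integral Heisenberg-type group, which is infinite, nonabelian and not almost abelian of rank $1$ — so "bookkeeping'' is not enough. The decisive step, absent from your proposal, is to use $h$-invariance of the quadric $Q$ itself: writing $\mathcal H^t\mathcal Q\mathcal H=\lambda\mathcal Q$ and showing $\lambda=1$ forces $a_h=c_h$ and $d_h=a_h(a_h-1)/2$, so $h$ is determined by the single number $a_h$, and $h\mapsto pa_h$ (with $p$ the divisibility of $w$ in $N^1(X)$) is an injective homomorphism $\mcal A(X)\to\Z$. Without this use of $Q$ the argument does not close; your fallback suggestion ("unipotent-versus-semisimple dichotomy'') does not supply the needed rigidity.
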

\begin{proof}
Set  $E=3ww_2^2/2$ and $F=w_2^3$. Then, using \eqref{eq:3} and \eqref{eq:4}, for all real variables $x,y,z$ we obtain the equation
$$(xw+yw_1+zw_2)^3=z(Fz^2+2Exz-Ey^2+Eyz).$$ 
Since $L=z$ by \eqref{eq:3}, we have $C=QL$, where $Q=Fz^2+2Exz-Ey^2+Eyz$. Noticing that $E \ne 0$ by Proposition \ref{pro:jordan2}, the tangent plane to $Q$ at $w$ is $(z=0)$. This shows (i).
\vskip 2mm
For (ii), consider any $h \in \mcal A(X)$. We may assume $\det h=1$, possibly replacing $\mcal A(X)$ by $\mcal A(X) \cap \SL(N^1(X)).$ 

The singular locus of $C$ is $\R w$, hence $\R w$ is $h$-invariant and therefore defined over $\mathbb Q$. By the shape of the cubic and by Proposition \ref{pro:cubic}, and since the element $g$ in Assumption \ref{assumption} is chosen arbitrarily, $h$ has a unique real eigenvalue $\alpha=1$. By Proposition \ref{pro:uvw} and by Lemma \ref{lem:jordanform}, $\R w$ is the only eigenspace of $h$, thus $hw = w$. 

The plane $L = c_2(X)^{\perp}$ is $h$-invariant, and note that $L$ is spanned by $w$ and $w_1$ by \eqref{eq:3}. In the basis $(w,w_1)$, the restriction $h|_L$ has the form 
$$
\left(
\begin{array}{ccc}
1 & a_h\\
0 & b_h
\end{array}
\right),
$$
and $\det (h|_L) = \pm 1$. 
By possibly replacing $\mcal A(X)$ by the preimage of $ \mcal A(X) \vert_L \cap \SL(L)$
under the restriction map $ \mcal A(X) \to \mcal A(X) \vert_L$, which has index at most $2$, we may assume that $\det (h|_L) = 1$, and thus $b_h = 1$. Hence, the matrix of $h$ in the basis $(w,w_1,w_2)$ is 
\begin{equation}\label{eq:jA}
\mathcal H=\left(
\begin{array}{ccc}
1 & a_h & d_h \\
0 & 1 & c_h \\
0 & 0 & 1
\end{array}
\right).
\end{equation}  
This implies, in particular, that $h$ cannot be of finite order. The quadric $Q$ is given in this basis by the matrix 
$$ \mathcal Q = \left(
\begin{array}{ccc}
0 & 0 & E \\
0 & -E &  \frac12 E \\
E & \frac12 E & F
\end{array}
\right).$$
We now view $Q$ as a quadric over $\C$. Since $Q$ is $h$-invariant, by the Nullstellensatz there exists $\lambda\in\Q$ such that $hQ= \lambda Q$, i.e. 
$\mathcal H^t\mathcal Q \mathcal H=\lambda \mathcal Q$. By taking determinants, we conclude that $\lambda^3 = 1$, hence $\lambda = 1$. Putting the  explicit matrices into the formula, we obtain 
\begin{equation}\label{equation}
a_h = c_h\quad\text{and}\quad d_h  = \frac{a_h (a_h - 1)}{2}.
\end{equation} 


Since $w\in N^1(X)$, there is a primitive element $\overline w\in N^1(X)$ and a positive integer $p$ such that $w=p\overline w$. We have $a_hp\overline w=a_h w=hw_1 - w_1\in N^1(X)$, hence the number $a_hp$ must be an integer. 
Consider the group homomorphism
$$ \tau\colon \mcal A(X) \to \Z, \qquad h\mapsto pa_h.$$
By (\ref{equation}), $\tau $ is injective, and therefore $\mathcal A(X)\simeq\Z$. Thus $\mathcal A(X)$ is abelian of rank $1$. 
%
\end{proof}

\bibliographystyle{amsalpha}

\bibliography{biblio}
\end{document}